\theoremstyle{plain}
\newtheorem{theorem}{Theorem}[section]
\newtheorem{lemma}[theorem]{Lemma}
\theoremstyle{definition}
\theoremstyle{remark}
\newtheorem{remark}[theorem]{Remark}
\newcommand{\C}{\mathbb{C}}
\title{Abelian objects in categories with normal projections}
\author{Michael Hoefnagel \and Zurab Janelidze}
\date{}
\begin{document}

\maketitle
\begin{abstract} It is known that in (regular) unital and in subtractive categories, internal abelian groups are simply behaved; e.g., they are the same as internal algebras $(A,s)$ satisfying $s(x,0)=x$ and $s(x,x)=0$, i.e., \emph{subtraction algebras}. Moreover, in these categorical settings, such internal abelian group structures are unique, and every morphism between the underlying objects of internal abelian groups is necessarily a morphism of internal abelian groups. It is also known that both (regular) unital and subtractive categories have normal projections, i.e., the isomorphism formula $(X\times Y)/Y\approx X$ holds. In this paper, we show that all properties of simple behaviour of internal abelian groups in unital and subtractive categories lift to arbitrary categories having normal projections.
\end{abstract}

\begin{description}
    \item[MSC2020:] 18E13, 18A30, 08B25, 08B05
    \item[Keywords:] abelian object, centralic category, cokernel, congruence hyperextensible category, Gumm category, isomorphism formula, normal projection, Shifting lemma, subtraction, subtractive category, unital category 
\end{description}
\section*{Introduction}
If $\C$ is a pointed category with finite products, then for any two objects $X$ and $Y$ in $\C$, the canonical product inclusion $(1_X, 0)\colon X \to X \times Y$ is a kernel of the canonical product projection $\pi_2\colon X\times Y \to Y$, but it is not generally true that $\pi_2$ is a cokernel of $( 1_X, 0 )$. When this is the case, we say, following \cite{Janelidze2003}, that $\C$ has \emph{normal projections}. We may think of categories having normal projections as those pointed categories with finite products in which the following law holds:
\[
(X\times Y)/X\approx Y.
\]
The syntactic characterisation of algebraic categories having normal projections, obtained in \cite{Janelidze2003}, reveals that the property of normality of projections is widely present across a wide variety of algebraic categories of mathematical structures. This, on the one hand, makes the property fundamental, but on the other hand, it seems to suggest that, perhaps, the property is too weak to allow for significant consequences. In fact, up until now no significant result has been proven in the context of a category having normal projections. The interest in this property has rather been in that it is a property shared by two, in some sense complimentary, classes of pointed categories: unital categories \cite{Bourn1996} and subtractive categories \cite{ZJanelidze2005}. There is another common feature of these two classes of categories, which we recall below.


A morphism \(s\colon X \times X \to X\) in a pointed category $\mathbb{C}$ is said to be an \emph{internal subtraction} if it satisfies
\[
s(x,x) = 0 \quad \text{and} \quad s(x,0) = x.
\]
Subtractive algebraic categories are precisely those whose forgetful functor to the category of pointed sets admits an internal subtraction (i.e., the corresponding Lawvere theory contains an internal subtraction, or equivalently, in the universal-algebraic language, the corresponding algebraic theory contains a binary term $s$ satisfying the identities above, where $0$ is the unique constant of the theory). It is known that when $\mathbb{C}$ is either a regular unital or a subtractive category, the following hold:
\begin{itemize}
\item[(A1)] Every morphism between objects that admit internal subtractions is homomorphic with respect to the subtractions.

\item[(A2)] Consequently, an internal subtraction $s\colon X\times X\to X$ is unique, when it exists, and it is the subtraction of an internal abelian group structure on $X$.

\item[(A3)] Combining the two results above, we also get that every morphism between two objects that admit an internal abelian group structure is a homomorphism of those (internal) abelian groups.
\end{itemize}
That the above holds for subtractive categories was established in \cite{BournJanelidze2009}. For regular unital categories, it is established in \cite{Bourn2022}. In fact, the properties above are established in \cite{Bourn2022} for a wider class of categories than regular unital categories: \emph{congruence hyperextensible categories}. In this paper, we show that the properties above are common to \emph{all} categories having normal projections, which include not only all unital and all subtractive categories, but as we note in this paper, also the congruence hyperextensible categories. In the language of \cite{Bourn2022}, we obtain that categories having normal projections are \emph{crystallographic for abelian groups}.

\section{Reformulations of the law \((A\times B)/B \cong A\)}

Before stating and proving the main result of this paper, we establish some characterisations of categories having normal projections. One of these characterisations will be useful for the main result, which is formulated and proved in the next section.

The proof of the following lemma is standard and left to the reader.

\begin{lemma}\label{LemC}
Given a split epimorphism \(r\colon R\to S\) with a right inverse \(s\colon S\to R\) and a morphism \(f\colon R\to T\), if 
\(f = ur\)
for some morphism \(u\), then 
\(
u = fs.
\)
Moreover, such a \(u\) exists if and only if 
$
f = fsr.$
\end{lemma}

By the above lemma, the universal property of a product projection \(\pi_2\colon A\times B\to B\) being a cokernel becomes equivalent to the implication
\begin{equation}\label{EqnA}
f\circ\iota_1=0\quad\Rightarrow\quad f=f\circ \iota_2\circ \pi_2,
\end{equation}
which must hold for any morphism \(f\colon A\times B\to A\).

\begin{theorem}\label{ThmA}
For a pointed category with binary products, the following conditions are equivalent:
\begin{enumerate}[(a)]
    \item The law \((A\times B)/B \cong A\) holds, i.e., (\ref{EqnA}) holds for all $f\colon A\times B\to C$. 
    \item The law \((X\times X)/X \cong X\) holds, i.e., (\ref{EqnA}) holds for all $f\colon X\times X\to Y$. 
    \item The following holds for all $f\colon X\times X\to Y$.
    \begin{equation}\label{EqnB}
f(1_X,0)=0\quad\Rightarrow\quad f(1_X,1_X)=f(0,1_X),
\end{equation}
    \item For all \(f\colon A\times B\to C\), \(a\colon X\to A\) and \(b\colon X\to B\), 
    \begin{equation}\label{EqnC}
    f(a,0)=0\quad\Rightarrow \quad f(a,b)=f(0,b).
    \end{equation}
    \item For all \(f\colon X\times X\to Y\) and \(x\colon U\to X\),
    \[
    f(x,0)=0\quad\Rightarrow \quad f(x,x)=f(0,x).
    \]
\end{enumerate}
\end{theorem}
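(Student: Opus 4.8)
The plan is to route everything through the cycle
\[
(a)\Rightarrow(d)\Rightarrow(e)\Rightarrow(c)\Rightarrow(a),
\]
and then to fold in (b) using the two cheap implications $(a)\Rightarrow(b)$ and $(b)\Rightarrow(c)$. Most of the arrows are pure specialisations and require no work: $(a)\Rightarrow(b)$ is (a) restricted to $A=B=X$; $(d)\Rightarrow(e)$ is the instance $A=B=X$, $a=b=x$ of (d); and $(e)\Rightarrow(c)$ is the instance $x=1_X$ of (e). For $(b)\Rightarrow(c)$ I would take $f\colon X\times X\to Y$ with $f(1_X,0)=0$, apply (b) to obtain $f=f\iota_2\pi_2$, and then precompose with the diagonal $(1_X,1_X)$; since $\pi_2(1_X,1_X)=1_X$ this reads off $f(1_X,1_X)=f\iota_2=f(0,1_X)$, which is (\ref{EqnB}).

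This leaves the two substantive arrows, each needing a construction. For $(c)\Rightarrow(a)$, given $f\colon A\times B\to C$ with $f\iota_1=0$, I would set $X:=A\times B$ and introduce $h\colon X\times X\to C$ by $h=f\circ(\pi_1 p_1,\pi_2 p_2)$, where $p_1,p_2\colon X\times X\to X$ are the projections and $\pi_1,\pi_2$ are the projections of $X=A\times B$. The point is that $h$ reads the $A$-coordinate off the first copy of $X$ and the $B$-coordinate off the second copy. A direct computation then gives $h(1_X,0)=f(\pi_1,0)=f\iota_1\pi_1=0$, while $h(1_X,1_X)=f$ and $h(0,1_X)=f\iota_2\pi_2$. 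Applying (c) to $h$ yields $h(1_X,1_X)=h(0,1_X)$, i.e.\ $f=f\iota_2\pi_2$, which is exactly the instance of (\ref{EqnA}) required for (a).

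For $(a)\Rightarrow(d)$, given $f\colon A\times B\to C$, $a\colon X\to A$, $b\colon X\to B$ with $f(a,0)=0$, I would form $g\colon X\times B\to C$ by $g=f\circ(a q_1,q_2)$, where $q_1,q_2$ are the projections of $X\times B$. One checks $g\iota_1=f(a,0)=0$, so (a) gives $g=g\iota_2\pi_2$; precomposing this identity with $(1_X,b)\colon X\to X\times B$ and using $g(1_X,b)=f(a,b)$ together with $g\iota_2=f\iota_2$ produces $f(a,b)=f(0,b)$, which is (d). (As a sanity check, $(d)\Rightarrow(a)$ is the trivial specialisation $a=\pi_1$, $b=\pi_2$, so these two formulations are genuinely the same strength.)

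The routine arrows are all specialisations, so I expect the main obstacle to be $(c)\Rightarrow(a)$: deducing the full cokernel-type statement from its purely diagonal instance. The difficulty is that (c) constrains $f$ only along the diagonal $X\to X\times X$, whereas (a) speaks about an arbitrary product $A\times B$ and a general codomain. The morphism $h$ above is precisely the device that encodes the general situation as a diagonal one, by separating $A$ and $B$ into two copies of $X=A\times B$ and extracting one coordinate from each copy. Verifying the three identities for $h$ (and the two for $g$) is a mechanical manipulation of product projections and inclusions; getting the shape of $h$ right is the only real idea in the proof.
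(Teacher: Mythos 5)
Your proof is correct, and its mathematical core coincides with the paper's: the one genuinely non-trivial device in both arguments is precomposition of $f$ with a product of morphisms so as to reduce the general implication to its diagonal instance. Indeed, your $h=f\circ(\pi_1p_1,\pi_2p_2)$ is exactly $f\circ(\pi_1\times\pi_2)$, which is the paper's morphism $f\circ(a\times b)$ from its proof of (c)$\Rightarrow$(d), specialised to $a=\pi_1$, $b=\pi_2$. Where you differ is the routing of the implications. The paper proves a single substantive arrow, (c)$\Rightarrow$(d), and closes the cycle (a)$\Rightarrow$(b)$\Rightarrow$(c)$\Rightarrow$(d)$\Rightarrow$(a), since (d)$\Rightarrow$(a) is the free specialisation $a=\pi_1$, $b=\pi_2$; condition (e) is folded in by the specialisations (d)$\Rightarrow$(e)$\Rightarrow$(c). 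You instead prove two substantive arrows, (c)$\Rightarrow$(a) and (a)$\Rightarrow$(d), and the second is redundant: as you note yourself, (d)$\Rightarrow$(a) is trivial, so keeping general $a\colon X\to A$ and $b\colon X\to B$ in your construction of $h$ (i.e., proving (c)$\Rightarrow$(d) rather than its special case (c)$\Rightarrow$(a)) would close the cycle (c)$\Rightarrow$(d)$\Rightarrow$(e)$\Rightarrow$(c), with (d)$\Rightarrow$(a)$\Rightarrow$(b)$\Rightarrow$(c) connecting the remaining conditions, and the $g=f\circ(aq_1,q_2)$ construction could be dropped entirely. So your argument is sound and rests on the same idea, but performs one explicit verification more than necessary; the paper's arrangement squeezes everything out of the specialisation arrows.
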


\begin{proof}
(a)\(\Rightarrow\)(b) is trivial, while (b)\(\Rightarrow\)(c) is easy because of the following: \[f=f\circ \iota_2\circ \pi_2\quad\Rightarrow\quad f(1_X,1_X)=f\circ \iota_2\circ \pi_2\circ (1_X,1_X)=f(0,1_X).\]
(d)$\Rightarrow$(e) is also trivial. We can get (e)$\Rightarrow$(b) by setting $x=1_X$ in (e) and we can get (d)$\Rightarrow$(a) by setting $a=\pi_1,b=\pi_2$ in (d). To complete the proof, it suffices to show (c)\(\Rightarrow\)(d). Assume (b). Suppose \(f(a,0)=0\). Then
\(
f\circ (a\times b)\circ \iota_1=0.
\)
Hence,
\(
f(a,b)=f\circ (a\times b)\circ (1_X,1_X)=f\circ (a\times b)\circ (0,1_X)=f(0,b).
\)
\end{proof}

\begin{remark}
The equivalence of (a) and (b) in Theorem~\ref{ThmA} was first established in \cite{Janelidze2004} through a direct proof of (b)\(\Rightarrow\)(a) that applies to the diagram
\[
\xymatrix{
A\times B\ar[r]^-{\iota_1}\ar[d]_-{\pi_1} & (A\times B)\times(A\times B)\ar[r]^-{\pi_2}\ar@/_3pt/[d]_-{\pi_1\times \pi_2} & A\times B \\
A\ar[r]_-{\iota_1} & A\times B\ar[r]_-{\pi_2}\ar@/_3pt/[u]_-{\iota_1\times\iota_2} & B\ar[u]_-{\iota_2}
}
\]
the following general fact: in a pointed category, consider a diagram
\[
\xymatrix{
\bullet\ar[r]\ar[d]\ar@{}[dr]|-{(1)} & \bullet\ar@{}[dr]|-{(2)}\ar[r]\ar@/_3pt/[d] & \bullet \\
\bullet\ar[r] & \bullet\ar[r]\ar@/_3pt/[u] & \bullet\ar[u]
}
\]
where (1) and (2) commute, and the middle downward morphism is a split epimorphism with the middle upward morphism its right inverse. If the top right arrow is a cokernel of the top left arrow and the bottom right arrow is an epimorphism, then it is the cokernel of the bottom left arrow.
\end{remark}

\section{Abelian objects}

In what follows, we make use of the language of generalised elements. A morphism \(s\colon X \times X \to X\) is called a \emph{subtraction} on $X$ if the following laws hold:
\[
s(x,x) = 0 \quad \text{and} \quad s(x,0) = x.
\] Given a subtraction $s$ on $X$ and a subtraction $s'$ on $X'$, we say that a morphism $f\colon X\to X'$ is \emph{homomorphic} with respect to these subtractions, if the following law holds:
\[
f(s(x,y))=s'(f(x),f(y)).
\]
As we can see from \cite{BournJanelidze2009}, if (A1) from the Introduction holds in a pointed category with finite products, then so do (A2) and (A3). Let us call a pointed category having finite products and satisfying (A1-3) a category having a \emph{subtractive theory of abelian objects}. By an \emph{abelian object} me mean an object that admits an internal abelian group structure (and such structure is unique).

\begin{theorem}
If a category has normal projections then it has a subtractive theory of abelian objects.
\end{theorem}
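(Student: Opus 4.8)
The plan is to prove the three properties (A1), (A2), (A3) using the characterization of normal projections from Theorem~\ref{ThmA}, particularly the pointwise form in condition (e) which says that whenever $f(x,0)=0$ we may conclude $f(x,x)=f(0,x)$. The key technical move throughout will be to construct, from given subtractions, auxiliary morphisms of the form $X\times X\to Y$ (or $X\times X\to X$) whose restriction along the first product inclusion is zero, so that condition (e) of Theorem~\ref{ThmA} can be applied to yield the desired identity. Let me sketch each part.

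For (A1), suppose $s$ is a subtraction on $X$, $s'$ is a subtraction on $X'$, and $f\colon X\to X'$ is any morphism; I want $f(s(x,y))=s'(f(x),f(y))$. The natural approach is to define a morphism $g\colon X\times X\to X'$ by $g(x,y)=s'(f(s(x,y)),f(\ldots))$ or more likely a comparison map measuring the failure of homomorphy, and arrange that its restriction to the first factor (setting the second variable to $0$) vanishes because of the unit laws $s(x,0)=x$ and $s'(x,0)=x$. Applying condition (e) in the diagonal form (with $x\mapsto x$) should then force the two sides to agree. First I would identify the correct $g$, check $g(\cdot,0)=0$ using the identities $s(x,0)=x$, $f(0)=0$, and $s'(z,0)=z$, then invoke (e) to conclude $g(x,x)=g(0,x)$, and finally unwind this equation into the homomorphism identity.

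For (A2), I expect uniqueness of the subtraction to follow quickly from (A1): applying (A1) to the identity morphism $1_X\colon X\to X$ with two subtractions $s,s'$ on $X$ shows $1_X$ is homomorphic, i.e.\ $s(x,y)=s'(x,y)$, giving $s=s'$. To show $s$ underlies an abelian group structure, I would define the addition by $a(x,y)=s(x,s(0,y))$ and verify associativity, commutativity, and the inverse and unit laws; each such verification reduces to an equation between parallel morphisms $X^n\to X$ that should again be discharged by repeated application of condition (e), after checking the appropriate vanishing-on-a-factor hypothesis. For (A3), once (A1) and (A2) are in hand, any morphism between abelian objects is homomorphic for the subtractions by (A1), and since addition, negation, and the unit are all expressible in terms of the (unique) subtraction, the morphism is automatically a homomorphism of abelian groups.

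The main obstacle will be (A1): finding the right auxiliary morphism $g\colon X\times X\to X'$ so that the hypothesis $g(x,0)=0$ of condition (e) holds on the nose and so that its conclusion $g(x,x)=g(0,x)$ rearranges into exactly the homomorphy law. The difficulty is that the natural "defect" map may not restrict to zero along the first inclusion without a preliminary normalization, so I may need to compose or reparametrize cleverly, possibly applying condition (e) more than once in succession. Once (A1) is correctly set up, (A2) and (A3) should be comparatively routine, the group axioms being standard consequences of the subtraction identities combined with normality of projections.
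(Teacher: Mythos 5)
Your overall strategy --- apply the pointwise form of Theorem~\ref{ThmA} to auxiliary ``defect'' morphisms that vanish when one variable is set to $0$ --- is exactly the method of the paper, but the proposal stops short of the two constructions that actually carry the proof, and the order in which you plan to establish things would cause the argument to stall. You propose to prove (A1) first and extract the group structure afterwards. The problem is the ``unwinding'' step you defer: if $d\colon X\times X\to X'$ is a defect map built from $s'$, the conclusion of Theorem~\ref{ThmA} gives you an identity of the form $s'(u,v)=s'(u',v')$ or $s'(u,v)=0$, and to convert $s'(u,v)=0$ into $u=v$ you need cancellation for $s'$, which holds only once you already know $s'$ is the subtraction of a group. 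Moreover, the naive defect map $d(x,y)=s'\bigl(f(s(x,y)),\,s'(f(x),f(y))\bigr)$ does satisfy $d(x,0)=0$, but its value $d(0,y)=s'\bigl(f(s(0,y)),\,s'(0,f(y))\bigr)$ does not simplify to $0$ without knowing $f(s(0,y))=s'(0,f(y))$ --- a special case of what you are trying to prove --- so the conclusion $d(x,y)=d(0,y)$ goes nowhere. This is precisely the ``preliminary normalization'' you flag as a possible obstacle; it is not optional.

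The paper resolves this by reversing the order. First it proves that any subtraction satisfies $s(s(x,z),s(y,z))=s(x,y)$, using the morphism $f\bigl(z,(x,y)\bigr)=s(s(x,z),s(y,z))$ on $X\times(X\times X)$, which visibly satisfies $f(z,(0,0))=0$; condition (d) of Theorem~\ref{ThmA} then gives $f(z,(x,y))=f(0,(x,y))=s(x,y)$. This single identity is known (cited to Janelidze's paper on expressing additivity via subtractivity) to force $s$ to be the subtraction of a group, so there is no need to verify associativity, commutativity, etc.\ one by one as you suggest. Only then does the paper treat a morphism $g\colon X\to X'$, using the defect map $f(x,y)=s'\bigl(g(a(x,y)),\,a'(g(x),g(y))\bigr)$ built from the \emph{additions} $a(x,y)=s(x,s(0,y))$ and $a'$; here $f(x,0)=0$ on the nose, the simplification of $f(0,y)$ to $0$ uses the group law $s(0,s(0,y))=y$, and the final step $g(a(x,y))=a'(g(x),g(y))$ uses group cancellation. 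Both of these uses of the group structure are unavailable in your proposed ordering. So the gap is concrete: you have not exhibited the auxiliary morphisms, and the one place you acknowledge difficulty is exactly where the proof's essential idea (establish the group law first, via $f(z,(x,y))=s(s(x,z),s(y,z))$) is needed.
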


\begin{proof}
First, we show that in a category having normal projections, every subtraction $s$ satisfies the law
$$s(s(x,z),s(y,z))=s(x,y).$$
This law is known to be equivalent to requiring that $s$ is the subtraction of a group (see, e.g., \cite{ZJanelidze2006c}). We can establish it by defining
\[f(z,(x,y))=s(s(x,z),s(y,z)).\]
Clearly, $f(z,(0,0))=0$. By Theorem~\ref{ThmA}, we get: $f(z,(x,y))=f(0,(x,y))$, i.e.,
$$s(s(x,z),s(y,z))=s(s(x,0),s(y,0))=s(x,y).$$
Now, let \(g\colon X \rightarrow X'\) and consider subtractions $s$ on $X$ and $s'$ on $X'$. By what we showed above, both $s$ and $s'$ are subtractions of groups. Let $a$ and $a'$ denote the corresponding additions, $$a(x,y)=s(x,s(0,y))\quad a'(x,y)=s'(x,s'(0,y)).$$ Define
\[
f(x,y) = s'(g(a(x,y)),a'(g(x),g(y))).
\]
Then \(f(x,0) = 0\), and so $f(x,y)=f(0,y)$ by Theorem~\ref{ThmA}. This gives
\[
s'(g(a(x,y)),a'(g(x),g(y)))=s'(g(a(0,y)),a'(g(0),g(y)))=s'(g(y),g(y))=0,
\]
which implies that $g(a(x,y))=a'(g(x),g(y)).$ This proves that $g$ is a homomorphism of groups, and hence homomorphic with respect to $s$ and $s'$.
\end{proof}

The construction of $f$ at the start of the proof of the theorem above, which is a crucial step in the proof, actually imitates a construction in the proof of a similar result for congruence hyperextensible categories found in \cite{Bourn2022}. The reason why such imitation is possible is that such categories actually have normal projections. Congruence hyperextensibility is a weakening of Gumm's shifting lemma. The implication (\ref{EqnC}) can be described as a further weakening of the same shifting lemma. Indeed, in a pointed category with pullbacks, writing $\mathsf{Ker}(f)$ for the kernel congruence of $f$, the implication (\ref{EqnC}) can be rewritten as the implication
$$(x,0)\mathsf{Ker}(f)(0,0)\quad\Rightarrow\quad (x,y)\mathsf{Ker}(f)(0,y).$$
This implication fits precisely the shape of the shifting lemma, as shown below.
\[\xymatrix@=50pt{ 
    (x,0) \ar@{-} \ar@{-}[r]_-{\mathsf{Ker}(\pi_1)} \ar@{-}[d]^-{\mathsf{Ker}(\pi_2)} \ar@{-}@/_10pt/[d]_-{\mathsf{Ker}(f)} & (x,y) \ar@{-}[d]_-{\mathsf{Ker}(\pi_2)} \ar@{..}@/^10pt/[d]^-{\mathsf{Ker}(f)} \\
    (0,0) \ar@{-}[r]^-{\mathsf{Ker}(\pi_1)} & (0,y) }\]
It is also an instance of the shape of the weaker form of the shifting lemma that defines a congruence hyperextensible category. Thus, we have the following result.

\begin{theorem}
Every congruence hyperextensible category has normal projections. 
\end{theorem}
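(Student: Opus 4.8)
The plan is to reduce the statement to the implication (\ref{EqnC}) and then read that implication off as a direct instance of the weakened shifting lemma that is built into the definition of a congruence hyperextensible category. By Theorem~\ref{ThmA}, a pointed category with binary products has normal projections precisely when (\ref{EqnC}) holds for all $f\colon A\times B\to C$, $a\colon X\to A$ and $b\colon X\to B$; so it suffices to establish (\ref{EqnC}) in any congruence hyperextensible category (which I assume to be pointed with pullbacks, so that kernel congruences are available).

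Following the reformulation recorded just before the statement, I would pass from the hypothesis $f(a,0)=0$ and the desired conclusion $f(a,b)=f(0,b)$ to the congruence-theoretic implication
$$(a,0)\,\mathsf{Ker}(f)\,(0,0)\quad\Rightarrow\quad (a,b)\,\mathsf{Ker}(f)\,(0,b),$$
considered on the object $A\times B$ equipped with the three congruences $R=\mathsf{Ker}(\pi_1)$, $S=\mathsf{Ker}(\pi_2)$ and $T=\mathsf{Ker}(f)$. The four generalised elements $(a,0)$, $(a,b)$, $(0,0)$, $(0,b)$ then arrange into exactly the shifting-lemma square: the horizontal edges are $R$-related (equal first coordinate), the vertical edges are $S$-related (equal second coordinate), the left vertical edge is the given $T$-relation, and the right vertical edge is the $T$-relation we must produce. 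This is the very shape of the weakened shifting lemma defining congruence hyperextensibility, so the desired implication follows from the defining property once its antecedent is checked.

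The only hypothesis requiring verification is the compatibility condition $R\wedge S\le T$, and here it is automatic: since $\pi_1$ and $\pi_2$ are jointly monic, one has $\mathsf{Ker}(\pi_1)\wedge\mathsf{Ker}(\pi_2)=\Delta$, the discrete congruence, and $\Delta\le T$ for every congruence $T$. Hence congruence hyperextensibility applies to this instance and yields $(a,b)\,\mathsf{Ker}(f)\,(0,b)$, i.e. $f(a,b)=f(0,b)$. An appeal to Theorem~\ref{ThmA} then concludes that the category has normal projections.

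I expect the main obstacle to be purely a matter of careful bookkeeping: confirming that this instance matches the defining shape of congruence hyperextensibility exactly — in particular that the assignment of $\mathsf{Ker}(\pi_1)$, $\mathsf{Ker}(\pi_2)$, $\mathsf{Ker}(f)$ to the three slots and the direction of the shift agree with the way the weakened lemma is phrased for internal (possibly merely reflexive) relations in a pointed category with pullbacks, as opposed to congruences in a variety. Once the dictionary between generalised elements and internal relations is in place, the categorical argument collapses to the observation that $R\wedge S=\Delta$, which renders the compatibility hypothesis vacuous.
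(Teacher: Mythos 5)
Your proposal is correct and follows essentially the same route as the paper: reduce normality of projections to the implication (\ref{EqnC}) via Theorem~\ref{ThmA}, rewrite it as $(a,0)\,\mathsf{Ker}(f)\,(0,0)\Rightarrow(a,b)\,\mathsf{Ker}(f)\,(0,b)$, and recognise this as an instance of the weakened shifting lemma with $\mathsf{Ker}(\pi_1)$, $\mathsf{Ker}(\pi_2)$ and $\mathsf{Ker}(f)$ in the three slots. Your explicit check that the compatibility hypothesis $\mathsf{Ker}(\pi_1)\wedge\mathsf{Ker}(\pi_2)=\Delta\le\mathsf{Ker}(f)$ is vacuous is a useful detail the paper leaves implicit.
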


The result implies that pointed congruence modular varieties, and more generally, pointed Gumm categories \cite{BournGran2004}, have normal projections. In fact, by the characterisation of Gumm categories in terms of congruence hyperextensible categories given in \cite{Bourn2022}, Gumm categories have normal local projections \cite{Janelidze2004} --- as we already know this from \cite{Hoefnagel2019b}.

The theorem above is also a simple consequence of Remark~2.11 in \cite{Hoefnagel2023}, which notes that every congruence hyperextensible category is \emph{centralic}, i.e., the following law holds:
$$f(x,0)=f(y,0)\quad\Rightarrow\quad f(x,z)=f(y,z).$$
Centralic categories have normal projects and include unital, weakly unital \cite{Martins-Ferreira-PHD}, pointed majority \cite{Hoefnagel2018a}, and pointed factor-permutable categories \cite{Gran2004} (but do not include subtractive categories).


\begin{thebibliography}{10}

\bibitem{Bourn1996}
D.~Bourn.
\newblock Mal'cev categories and fibration of pointed objects.
\newblock {\em Applied Categorical Structures}, 4:307--327, 1996.

\bibitem{Bourn2022}
D.~Bourn.
\newblock On congruence modular varieties and gumm categories.
\newblock {\em Communications in Algebra}, 50(6):2377--2407, 2022.

\bibitem{BournGran2004}
D.~Bourn and M.~Gran.
\newblock Normal sections and direct product decompositions.
\newblock {\em Communications in Algebra}, 32(10):3825--3842, 2004.

\bibitem{BournJanelidze2009}
D.~Bourn and Z.~Janelidze.
\newblock Subtractive categories and extended subtractions.
\newblock {\em Applied Categorical Structures}, 17(4):317--343, 2009.

\bibitem{Gran2004}
M.~Gran.
\newblock Applications of categorical galois theory in universal algebra.
\newblock {\em Fields Institute Communications}, 43:243--280, 2004.

\bibitem{Hoefnagel2018a}
M.~Hoefnagel.
\newblock Majority categories.
\newblock {\em Theory and Applications of Categories}, 34:249--268, 2019.

\bibitem{Hoefnagel2019b}
M.~Hoefnagel.
\newblock Products and coequalizers in pointed categories.
\newblock {\em Theory and Applications of Categories}, 34(43):1386--1400, 2019.

\bibitem{Hoefnagel2023}
M.~Hoefnagel.
\newblock Centrality and the commutativity of finite products with coequalisers.
\newblock {\em Theory and Applications of Categories}, 39(13):423--443, 2023.

\bibitem{Janelidze2003}
Z.~Janelidze.
\newblock Characterization of pointed varieties of universal algebras with normal projections.
\newblock {\em Theory and Applications of Categories}, 11:212--214, 2003.

\bibitem{Janelidze2004}
Z.~Janelidze.
\newblock Varieties of universal algebras with normal local projections.
\newblock {\em Georgian Mathematical Journal}, 11(1):93--98, 2004.

\bibitem{ZJanelidze2005}
Z.~Janelidze.
\newblock Subtractive categories.
\newblock {\em Applied Categorical Structures}, 13(4):343--350, 2005.

\bibitem{ZJanelidze2006c}
Z.~Janelidze.
\newblock Closedness properties of internal relations iv: Expressing additivity of a category via subtractivity.
\newblock {\em Journal of Homotopy and Related Structures}, 1(1):1--8, 2006.

\bibitem{Martins-Ferreira-PHD}
N.~Martins-Ferreira.
\newblock Low-dimensional internal categorical structures in weakly mal’cev sesquicategories.
\newblock 2008.
\newblock PhD Thesis.

\end{thebibliography}
\end{document}